\documentclass{amsart}
\usepackage[utf8]{inputenc}
\usepackage{amsmath}
\usepackage{amssymb}
\usepackage{caption}
\usepackage{amsthm}
\usepackage[hidelinks]{hyperref}
\usepackage{cleveref}
\crefname{lemma}{Lemma}{Lemmas}
\crefname{theorem}{Theorem}{Theorems}
\usepackage{xcolor}
\usepackage{comment}
\usepackage{graphicx}
\usepackage{tikz}
\usepackage{tikz-cd}
\usepackage{mathrsfs}
\usetikzlibrary{shapes.geometric}
\makeatletter
\def\@settitle{\begin{center}%
  \baselineskip14\p@\relax
  \bfseries
  \uppercasenonmath\@title
  \@title
  \ifx\@subtitle\@empty\else
     \\[1ex]\uppercasenonmath\@subtitle
     \footnotesize\mdseries\@subtitle
  \fi
  \end{center}%
}
\def\subtitle#1{\gdef\@subtitle{#1}}
\def\@subtitle{}
\makeatother

\newtheorem{theorem}{Theorem}[section]

\newtheorem{lemma}[theorem]{Lemma}

\theoremstyle{remark}
\newtheorem{remark}[theorem]{Remark}

\theoremstyle{remark}

\usepackage{chngcntr}
\usepackage{graphicx} 
\usepackage{float}
\counterwithout{equation}{section}
\counterwithout{theorem}{section}

\begin{document}

\title{A novel witness to incoherence of $\mathrm{SL}_5(\mathbb{Z})$}

\author{Sami Douba}

\begin{abstract}
Motivated by a question of Stover, we discuss an example of a Zariski-dense finitely generated subgroup of $\mathrm{SL}_5(\mathbb{Z})$ that is not finitely presented.
\end{abstract}

\address{Institut des Hautes \'Etudes Scientifiques, 
Universit\'e Paris-Saclay, 35 route de Chartres, 91440 Bures-sur-Yvette, France}

\maketitle

A group $\Gamma$ is {\em coherent} if all its finitely generated subgroups are finitely presented. Otherwise, we say $\Gamma$ is {\em incoherent}. In this note, we will concern ourselves with the case that $\Gamma$ is a discrete subgroup of a noncompact simple Lie group. For a broader discussion on coherence, we refer the reader to Wise's survey \cite{wise2020invitation}. 

Since finitely generated fundamental groups of (possibly open) surfaces are finitely presented, all Fuchsian groups are coherent. That the former statement holds also for $3$-manifolds is due independently to Scott \cite{scott1973finitely, scott1973compact} and Shalen (unpublished); in particular, all Kleinian groups are in fact coherent. On the other hand, the product $F_2 \times F_2$, where $F_2$ is a free group of rank two, admits a map onto $\mathbb{Z}$ whose kernel is finitely generated but not finitely presented~\cite[Example~9.22]{wise2020invitation}. It follows that $\mathrm{SL}_n(\mathbb{Z})$ is incoherent for $n \geq 4$, since the latter possesses a block-diagonal copy of $F_2 \times F_2$. Whether or not $\mathrm{SL}_3(\mathbb{Z})$ is coherent is a question of Serre \cite[Problem~F14]{wall1979homological} and remains open.

While a simple Lie group $G$ of real rank one possesses no discrete copies of $F_2 \times F_2$, many (conjecturally, all) lattices in $G$ are incoherent as soon as $G$ is not locally isomorphic to $\mathrm{SL}_2(\mathbb{R})$ or $\mathrm{SL}_2(\mathbb{C})$; see Kapovich–Potyagailo–Vinberg \cite{kapovich2008noncoherence} and Kapovich~\cite{kapovich2013noncoherence}. 
We remark that, when ${n \geq 5}$, another way of certifying that $\mathrm{SL}_n(\mathbb{Z})$ is incoherent is to observe that $\mathrm{SL}_n(\mathbb{Z})$ contains a copy of the incoherent group $\mathrm{SO}(4,1; \mathbb{Z})$; indeed, as observed in~\cite{kapovich2008noncoherence}, the first construction of a geometrically finite incoherent subgroup of $\mathrm{SO}(4,1)$, due to Kapovich and Potyagailo \cite{kapovich1991absence}, can be carried out within $\mathrm{SO}(4,1; \mathbb{Z})$. 

We call a finitely generated but not finitely presented subgroup of a group $\Gamma$ a {\em witness to incoherence of $\Gamma$}.  A question posed by Stover \cite{stover2019coherence} that is not addressed by the above discussion is whether there are {\em Zariski-dense} witnesses to incoherence of $\mathrm{SL}_n(\mathbb{Z})$, $n \geq 4$. It was suggested to the author by K. Tsouvalas that, if one allows freely decomposable examples, then Stover's question has an affirmative answer at least for $n \geq 5$. Indeed, a ping-pong argument in the projective space~$\mathbb{P}(\mathbb{R}^n)$ demonstrates that $\mathrm{SL}_n(\mathbb{Z})$ contains for each $n \geq 5$ a subgroup decomposing as $\Gamma_0 * F$, where $\Gamma_0 < \mathrm{SO}(4,1; \mathbb{Z})$ is a geometrically finite incoherent group arising from the Kapovich–Potyagailo construction and $F$ is a Zariski-dense Anosov free subgroup of $\mathrm{SL}_n(\mathbb{Z})$ \cite{douba2023regular}. For any witness $\Delta_0 < \Gamma_0$ to incoherence of $\Gamma_0$, the subgroup $\langle \Delta_0, F \rangle < \mathrm{SL}_n(\mathbb{Z})$ is then a Zariski-dense witness to incoherence of $\mathrm{SL}_n(\mathbb{Z})$. The purpose of this note is to describe a Zariski-dense witness to incoherence of $\mathrm{SL}_5(\mathbb{Z})$ of a different nature; for instance, this witness has cohomological dimension $3$, its limit set in $\mathbb{P}(\mathbb{R}^5)$ in the sense of Guivarc'h \cite{guivarch1990produits} is connected (indeed, is the boundary of an exotic properly convex projective domain), and we suspect this witness is one-ended.

\tikzset{my dbl/.style={double,double distance=2pt}}
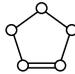
\begin{figure}
\scalebox{2}{ \begin{tikzpicture}[mystyle/.style={draw,shape=circle,fill=white, inner sep=0pt, minimum size=2pt, line width=0.01cm}]
\def\ngon{5}
\node[draw, regular polygon,regular polygon sides=\ngon,minimum size=0.4cm, line width=0.01cm] (p) {};
\draw[style=double,line width=0.01cm] (p.corner 3) -- (p.corner 4);
\foreach\x in {1,...,\ngon}{
    \node[mystyle=\x] (p\x) at (p.corner \x){};
}
\end{tikzpicture}}
\caption{The diagram $\Sigma$.}
\label{fig:diagram}
\end{figure}
We now proceed to the description. Let $W$ be the Coxeter group given by the diagram $\Sigma$ shown in Figure \ref{fig:diagram}.
The Cartan matrix $A$ of $\Sigma$ is
$$
A = \begin{pmatrix}
2 & -\sqrt{2} & 0 & 0 & -1 \\
-\sqrt{2} & 2 & -1 & 0 & 0 \\
0 & -1 & 2 & -1 & 0  \\
0 & 0 & -1 & 2 & -1 \\
-1 & 0 & 0 & -1 & 2
\end{pmatrix}.
$$
Since $A$ has signature $(4,1)$, the Coxeter group $W$ can be realized as a reflection group in $\mathrm{Isom}(\mathbb{H}^4)$, and this realization is unique up to conjugation, so that we may conflate $W$ with its image in $\mathrm{Isom}(\mathbb{H}^4)$; see Vinberg \cite[Thm.~2.1]{vinberg1985hyperbolic}. Moreover, since all proper induced subdiagrams of $\Sigma$ are elliptic, we have that each fundamental chamber for $W$ in $\mathbb{H}^4$ is a compact Coxeter simplex $P \subset \mathbb{H}^4$.

\begin{remark}\label{arith}
The compact hyperbolic Coxeter simplices were enumerated by Lann\'er \cite{lanner1950complexes}. The simplex $P$ is one among only five such simplices in dimension $4$, the highest dimension in which such simplices exist, and, crucially for us, is the only one among those with the property that, for each of its dihedral angles $\theta$, the quantity $4\cos^2\theta$ is an integer. 
We remark also that the Galois conjugate $A^\sigma$ of $A$ is positive-definite, where $\sigma: \mathbb{Q}(\sqrt{2}) \rightarrow \mathbb{Q}(\sqrt{2})$ is the unique nontrivial automorphism, so that $W$ is in fact an arithmetic subgroup of $\mathrm{Isom}(\mathbb{H}^4)$. 
\end{remark}

While it is unclear to us whether the simplex $P$ tiles a compact right-angled polyhedron in $\mathbb{H}^4$, it is nevertheless true that some finite-index subgroup of $W$ abstractly embeds in a right-angled Coxeter group. Indeed, the latter remains true when $W$ is replaced with an arbitrary finitely generated Coxeter group by work of Haglund and Wise \cite{haglund2010coxeter}; alternatively, one can exploit arithmeticity of $W$ (see Remark~\ref{arith}) and apply a result of Bergeron, Haglund, and Wise \cite{bergeron2011hyperplane}. In particular, the group $W$ is virtually residually finite rationally solvable (RFRS) in the sense of Agol \cite{agol2008criteria}, and hence, by a theorem of Kielak \cite{kielak2020residually}, possesses a torsion-free finite-index subgroup $\Lambda < W$, which we may assume preserves orientation, that maps onto $\mathbb{Z}$ with finitely generated kernel $\Delta \lhd \Lambda$. That $\Delta$ is not finitely presented is an application of the following lemma to $M = \Lambda \backslash \mathbb{H}^4$ and $\tilde{M} = \Delta \backslash \mathbb{H}^4$; for a more general statement, see Llosa Isenrich, Martelli, and Py \cite[Prop.~14]{isenrich2021hyperbolic}. Note that $\chi(\Lambda \backslash \mathbb{H}^4) > 0$ by the Chern–Gauss–Bonnet theorem. 

\begin{lemma}\label{milnor}
Let $M$ be a closed connected oriented aspherical $4$-manifold with $\chi(M) \neq 0$. Then $\pi_1(\tilde{M})$ is not finitely presented for any infinite cyclic cover~$\tilde{M}$ of $M$.
\end{lemma}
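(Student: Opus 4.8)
The plan is to prove the contrapositive in a quantitative form. Write $G = \pi_1(M)$, let $\phi \colon G \to \mathbb{Z}$ be the surjection whose associated cover is $\tilde M$, and set $\Delta = \ker \phi = \pi_1(\tilde M)$. If $\Delta$ is not finitely generated there is nothing to prove, so assume it is. Since a cover of an aspherical space is aspherical, $\tilde M$ is a $K(\Delta,1)$ and $H_*(\tilde M; k) = H_*(\Delta; k)$ for any field $k$, which I now fix. The deck group of $\tilde M \to M$ is $\mathbb{Z} = \langle t \rangle$, and, as $M$ has the homotopy type of a finite CW complex, the cellular chain complex $C_*(\tilde M; k)$ is a finite complex of finitely generated free modules over $R := k[t, t^{-1}]$. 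Since $R$ is a principal ideal domain, each homology module $H_i(\tilde M; k)$ is a finitely generated $R$-module, and it is finite-dimensional over $k$ precisely when it is $R$-torsion.

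The first observation is that the Euler characteristic is computed by the $R$-ranks $r_i := \mathrm{rank}_R H_i(\tilde M; k)$: tensoring the finite free complex $C_*(\tilde M; k)$ with the fraction field of $R$ shows that $\sum_i (-1)^i r_i$ equals the alternating sum of the numbers of cells of $M$, namely $\chi(M)$. Now $r_0 = 0$ because $H_0(\tilde M; k) = k = R/(t-1)$ is torsion; $r_4 = 0$ because $\tilde M$ is a connected noncompact $4$-manifold, so $H_4(\tilde M; k) = 0$; and $r_1 = 0$ because $\Delta$ is finitely generated, so $H_1(\tilde M; k) = H_1(\Delta; k)$ is finite-dimensional, hence torsion. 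Thus $\chi(M) = r_2 - r_3$.

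The key input is Poincar\'e duality for the infinite cyclic cover, at the level of $R$-modules. As $M$ is a closed oriented aspherical $4$-manifold, $G$ is an orientable Poincar\'e duality group of dimension $4$: on the universal cover, $C_*(\widehat M; k)$ is $kG$-chain homotopy equivalent to $\mathrm{Hom}_{kG}(C_{4-*}(\widehat M; k), kG)$. Applying $- \otimes_{kG} R$, where $R = k[G/\Delta]$ is viewed as a $(kG, R)$-bimodule, yields an $R$-chain homotopy equivalence $C_*(\tilde M; k) \simeq \mathrm{Hom}_R(C_{4-*}(\tilde M; k), R)$; this is Milnor's duality for infinite cyclic coverings. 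Feeding it into the universal coefficient exact sequence over the principal ideal domain $R$ --- in which $\mathrm{Hom}_R(V, R)$ has the same rank as $V$ while $\mathrm{Ext}^1_R(V, R)$ is torsion --- gives $r_i = r_{4-i}$ for all $i$. In particular $r_3 = r_1 = 0$, so $\chi(M) = r_2$.

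The conclusion is now immediate: $\chi(M) = r_2 = \mathrm{rank}_R H_2(\Delta; k)$, so the hypothesis $\chi(M) \neq 0$ forces $H_2(\Delta; k)$ to have positive $R$-rank and hence to be infinite-dimensional over $k$. Therefore $\Delta$ is not of type $FP_2$, and in particular is not finitely presented. I expect the duality step of the third paragraph to be the only real obstacle: one must descend the equivariant Poincar\'e duality of the $PD_4$ group $G$ to $R = k[t,t^{-1}]$-coefficients on the noncompact cover $\tilde M$ and extract the rank symmetry $r_i = r_{4-i}$. Everything else is the bookkeeping that the finite cellularity of $M$ makes the modules $H_i(\tilde M; k)$ finitely generated over $R$ and lets one read off $\chi(M)$ from their ranks.
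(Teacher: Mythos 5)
Your proof is correct, and it shares the paper's overall skeleton: assume $\pi_1(\tilde M)$ is finitely generated, rule out degrees $0,1,4$, invoke a duality theorem for infinite cyclic covers to rule out degree $3$, and conclude that $H_2(\tilde M;k)\cong H_2(\pi_1(\tilde M);k)$ is infinite-dimensional, which is incompatible with finite presentability. Where you genuinely differ is in the two key inputs, which the paper cites as black boxes and you reprove. The paper quotes Milnor's theorem (if $\chi(M)\neq 0$, some $H_i(\tilde M;\mathbb{Q})$ is infinite-dimensional) and Kawauchi's partial duality theorem (given that $H_1(\tilde M;\mathbb{Q})$ is finite-dimensional, $H_3(\tilde M;\mathbb{Q})=\mathbb{Q}$). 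You instead work at the chain level over the principal ideal domain $R=k[t,t^{-1}]$: tensoring with the fraction field recovers Milnor's Euler-characteristic identity $\chi(M)=\sum_i(-1)^i r_i$ quantitatively, and equivariant Poincar\'e duality for the $PD_4$ group $\pi_1(M)$, descended to $R$ and fed through the universal coefficient theorem over $R$, yields the rank symmetry $r_i=r_{4-i}$ in place of Kawauchi's theorem. This buys self-containedness and the sharper conclusion $\chi(M)=\mathrm{rank}_R\, H_2(\tilde M;k)$, at the cost of length. Two points worth tightening in your write-up: (i) the dual complex $\mathrm{Hom}_{kG}(C_{4-*}(\widehat M;k),kG)$ carries the conjugate module structure induced by the involution $g\mapsto g^{-1}$, so the equivalence you obtain is with the conjugate of $\mathrm{Hom}_R(C_{4-*}(\tilde M;k),R)$; this is harmless here, since $t\mapsto t^{-1}$ is a ring automorphism of $R$ and therefore preserves ranks, but it should be acknowledged. (ii) You use that $M$ has the homotopy type of a finite CW complex; this is automatic for smooth manifolds (which covers the paper's application to $\Lambda\backslash\mathbb{H}^4$) and true but nontrivial for compact topological $4$-manifolds — the paper's appeal to Milnor's theorem needs the same hypothesis, so this is not a gap relative to the paper's own proof.
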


\begin{proof}
By a result of Milnor \cite{milnor1968infinite}, there is some $i \in \mathbb{N}$ such that $H_i(\tilde{M}; \mathbb{Q})$ has infinite dimension. Since $\tilde{M}$ is an open $4$-manifold, we have $b_i(\tilde{M}) = 0$ for $i > 3$. Now if $\pi_1(\tilde{M})$ is not finitely generated, then we are done. Otherwise, the dimension of $H_1(\tilde{M}; \mathbb{Q})$ is finite, and hence, by the partial duality theorem of Kawauchi \cite{kawauchi1975partial}, we have $H_3(\tilde{M}; \mathbb{Q}) = \mathbb{Q}$. This forces the dimension of $H_2(\tilde{M}; \mathbb{Q})$ to be infinite, so that $\pi_1(\tilde{M})$ cannot be finitely presented.
\end{proof}

Now let $\rho: W \rightarrow \mathrm{GL}_5(\mathbb{R})$ be the representation given by
\[
\rho(s_i)(v) = v - (v^T A' e_i)e_i
\]
for each $v \in \mathbb{R}^5$, where $s_1, \ldots, s_5 \in W$ are the standard generators of $W$, and
\[
A' = \begin{pmatrix}
2 & -1 & 0 & 0 & -1 \\
-2 & 2 & -1 & 0 & 0 \\
0 & -1 & 2 & -1 & 0  \\
0 & 0 & -1 & 2 & -1 \\
-1 & 0 & 0 & -1 & 2
\end{pmatrix}.
\]
Note that since $A'$ has integer entries, in fact $\rho(W) \subset \mathrm{GL}_5(\mathbb{Z})$. The representation~$\rho$ is the Vinberg representation of $W$ with Cartan matrix $A'$; since $A'$ is of negative type, we have in particular that $\rho$ is faithful and preserves a properly convex domain $\Omega \subset \mathbb{P}(\mathbb{R}^5)$; see Vinberg \cite{MR0302779}. As in the hyperbolic setting, since all proper induced subdiagrams of $\Sigma$ are elliptic, we have that $\rho(W)$ acts cocompactly on $\Omega$ (in the language of Benz\'ecri \cite{benzecri1960varietes}, the group $\rho(W)$ {\em divides} $\Omega$). The same is then true of $\rho(\Lambda)$. Since the matrix $A'$ is not symmetric, the domain $\Omega$ is not an ellipsoid.\footnote{In fact, no subgroup of $\mathrm{SL}_n(\mathbb{Z})$ divides an ellipsoid in $\mathbb{P}(\mathbb{R}^n)$ for $n \geq 5$. Indeed, such a subgroup $\Gamma < \mathrm{SL}_n(\mathbb{Z})$ would have to be of finite index in $\mathrm{SO}(Q; \mathbb{Z})$ for some rational quadratic form $Q$ in $n \geq 5$ variables, but such $Q$ is isotropic by Meyer's theorem, so that $\Gamma$ cannot be cocompact in $\mathrm{SO}(Q; \mathbb{R})$.} A theorem of Benoist \cite{benoist2003convexes} then asserts that $\rho(\Lambda)$ is Zariski-dense in $\mathrm{SL}_5(\mathbb{R})$. By simplicity of the latter, the infinite normal subgroup $\rho(\Delta) \lhd \rho(\Lambda)$ remains Zariski-dense in $\mathrm{SL}_5(\mathbb{R})$. We conclude that $\rho(\Delta)$ is a Zariski-dense witness to incoherence of $\mathrm{SL}_5(\mathbb{Z})$. 

\begin{remark}
The representation $\rho$ of $W$ also appears in Choi and Choi \cite[Prop.~11]{choi2015definability}, and is analogous to the triangle group representations into $\mathrm{GL}_3(\mathbb{Z})$ discovered by Kac and Vinberg \cite{vinberg1967quasi}. 
\end{remark}

\begin{remark}
The above strategy will not produce a Zariski-dense witness to incoherence of $\mathrm{SL}_4(\mathbb{Z})$, since any finitely generated subgroup of $\mathrm{GL}_4(\mathbb{R})$ preserving and acting properly on a domain in $\mathbb{P}(\mathbb{R}^4)$---for instance, any Vinberg reflection group in $\mathrm{GL}_4(\mathbb{R})$, hence any Coxeter group on at most $4$ vertices\footnote{That a Coxeter group on at most $4$ vertices is virtually a $3$-manifold group can also be deduced, by an argument along the lines of \cite[Example~1.2]{dani2023right}, from the fact that any non-elliptic Coxeter system on at most $4$ vertices has planar nerve.}---is virtually the fundamental group of a $3$-manifold and is thus coherent. However, it seems plausible that suitable Vinberg representations of incoherent Coxeter groups might yield one-ended Zariski-dense witnesses to incoherence of $\mathrm{SL}_n(\mathbb{Z})$ for arbitrarily large $n$. The approach that appears most promising in this regard is to consider Vinberg representations of finite-index reflection subgroups $W_n < W$ of increasing rank, where~$W$ is a fixed Coxeter group that virtually maps onto $\mathbb{Z}$ with finitely generated but not finitely presented kernel. However, such representations will not divide projective domains in high dimensions, so different methods of certifying Zariski-density would be required.
\end{remark}

\subsection*{Acknowledgements} We are very grateful to Balthazar Fl\'echelles, Gye-Seon Lee, Pierre Py, and Konstantinos Tsouvalas for helpful discussions. The author was supported by the Huawei Young Talents Program.

\bibliography{bantingbib}{}
\bibliographystyle{siam}

\end{document}